\tikzstyle{block} = [draw, fill=blue!20, rectangle, 
\tikzstyle{sum} = [draw, fill=blue!20, circle, node distance=1cm]
\tikzstyle{input} = [coordinate]
\tikzstyle{output} = [coordinate]
\newtheorem{theorem}{Theorem}[section]
\newtheorem{proposition}[theorem]{Proposition}
\newtheorem{lemma}[theorem]{Lemma}
\theoremstyle{definition}
\newtheorem{definition}[theorem]{Definition}
\newtheorem{remark}[theorem]{Remark}
\newtheorem{example}[theorem]{Example}
\newcommand{\C}{\mathbb{C}}
\newcommand{\T}{\mathbb{T}}
\newcommand{\R}{\mathbb{R}}
\newcommand{\D}{\mathbb{D}}
\newcommand{\N}{\mathbb{N}}
\renewcommand{\H}{\mathbb{H}}
\newcommand{\abs}[1]{\left| #1 \right|}
\title{Examples in Discrete Iteration of Arbitrary Intervals of Slopes}
\subjclass[2010]{Primary 30D05, 37FXX}
\keywords{Complex dynamics; Discrete iteration; the slope problem}
\date{\today}
\thanks{This research was supported in part by Ministerio de Innovaci\'on y Ciencia, Spain, project PID2022-136320NB-I00. The second author was supported by Ministerio de Universidades, Spain, through the action Ayuda del Programa de Formaci\'on de Profesorado Universitario, reference FPU21/00258.}
\author[M. D. Contreras]{Manuel D. Contreras}
\address{Departamento de Matem\'atica Aplicada II and IMUS, Escuela T\'ecnica Superior de Ingenier\'ia, Universidad de Sevilla,
	Camino de los Descubrimientos, s/n 41092, Sevilla, Spain}
\email{contreras@us.es}
\author[F. J. Cruz-Zamorano]{Francisco J. Cruz-Zamorano}
\address{Departamento de Matem\'atica Aplicada II and IMUS, Escuela T\'ecnica Superior de Ingenier\'ia, Universidad de Sevilla,
	Camino de los Descubrimientos, s/n 41092, Sevilla, Spain}
\email{fcruz4@us.es}
\author[L. Rodr\'iguez-Piazza]{Luis Rodr\'iguez-Piazza}
\address{Departmento de An\'alisis Matem\'atico and IMUS, Facultad de Matem\'aticas, Universidad
	de Sevilla, Calle Tarfia, s/n 41012 Sevilla, Spain}
\email{piazza@us.es}
\begin{document}
\begin{abstract}
Given a compact interval $[a,b] \subset [0,\pi]$, we construct a parabolic self-map of the upper half-plane whose set of slopes is $[a,b]$. The nature of this construction is completely discrete and explicit: we explicitly construct a self-map and we explicitly show in which way its orbits wander towards the Denjoy-Wolff point. We also analyze some properties of the Herglotz measure corresponding to such example, which yield the regularity of such self-map in its Denjoy-Wolff point.
\end{abstract}
\maketitle
\section{Introduction}
Discrete Iteration in the unit disk $\D$ is a branch of the vast field known as Complex Dynamics. Given a holomorphic self-map $g \colon \D \to \D$, the main aim is to analyze asymptotic properties of the sequence of iterated self-compositions of $g$, given by $g^0 = \mathrm{Id}_{\D}$, and $g^{n+1} = g^n \circ g$, $n \in \N \cup \{0\}$. In this article we focus on non-elliptic self-maps, which are the ones that possess no fixed point on $\D$. The following well-known result concerns the dynamics of such self-maps.
\begin{theorem}
\label{thm:DW}
{\normalfont \cite[Theorem 3.2.1]{AbateBook} (Denjoy-Wolff Theorem).}
Let $g \colon \D \to \D$ be a non-elliptic self-map. Then, there exists $\tau \in \partial\D$ such that $g^n \to \tau$ locally uniformly, as $n \to \infty$.
\end{theorem}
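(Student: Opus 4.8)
The plan is to follow Wolff's classical strategy, approximating $g$ by strict contractions and exploiting the Schwarz-Pick lemma. First I would regularize: for each $r \in (0,1)$ set $g_r = r g$, which maps $\D$ into the relatively compact subdisk $r\D$, and hence maps the closed disk $\overline{r\D}$ into itself. Brouwer's fixed point theorem then supplies a fixed point $z_r \in \D$, unique because $g_r$ strictly contracts the hyperbolic metric. Letting $r \to 1^-$, I would argue that the $z_r$ cannot remain in a compact subset of $\D$: a subsequential interior limit would be an honest fixed point of $g$ by continuity, contradicting non-ellipticity. Therefore $|z_r| \to 1$, and passing to a subsequence gives a boundary point $\tau \in \partial\D$ with $z_{r_n} \to \tau$; this $\tau$ is the candidate Denjoy-Wolff point.

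The core of the argument is Wolff's lemma, asserting that $g$ preserves every horocycle
\[
E(\tau, R) = \left\{ z \in \D : \frac{|\tau - z|^2}{1 - |z|^2} < R \right\}, \qquad R > 0,
\]
each of which is a disk internally tangent to $\partial\D$ at $\tau$. To prove it I would apply the Schwarz-Pick inequality to $g_{r_n}$ with base point its own fixed point $z_{r_n}$, rewrite the resulting estimate in terms of the horocycle function $|\tau - \cdot|^2/(1 - |\cdot|^2)$, and let $n \to \infty$ so that $z_{r_n} \to \tau$. In the limit the inequality collapses to
\[
\frac{|\tau - g(z)|^2}{1 - |g(z)|^2} \le \frac{|\tau - z|^2}{1 - |z|^2}, \qquad z \in \D,
\]
which is exactly the invariance of the horocycles $E(\tau, R)$ under $g$.

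To obtain convergence I would combine this invariance with a normal-families argument. The family $\{g^n\}$ is uniformly bounded, hence normal by Montel's theorem, so every subsequence has a locally uniform limit $h \colon \D \to \overline{\D}$. A non-constant limit would be an open map, and the semigroup structure of limits of iterates would force it to be an automorphism of $\D$, endowing $g$ with an interior fixed point and contradicting non-ellipticity; so every subsequential limit is a constant $c$. If $c$ lay in $\D$, then passing to the limit in $g^{n_k+1} = g \circ g^{n_k}$ would give $g(c) = c$, again impossible; hence $c \in \partial\D$. Finally, fixing $z \in \D$ and iterating the horocycle inequality confines the orbit $\{g^n(z)\}$ to a single horocycle $E(\tau, R)$, whose closure meets $\partial\D$ only at $\tau$, forcing $c = \tau$. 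As every subsequential limit equals the constant $\tau$, the sequence $g^n$ converges to $\tau$ locally uniformly.

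I expect the two delicate points to be, first, the limit passage in Wolff's lemma, where the Schwarz-Pick estimate must be controlled uniformly as the fixed points $z_{r_n}$ race to the boundary, and second, the verification that no subsequential limit of $\{g^n\}$ can be non-constant -- the step that genuinely uses fixed-point-freeness and guarantees the same boundary value $\tau$ is attained along every subsequence rather than different points of $\partial\D$.
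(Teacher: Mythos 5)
This theorem is not proved in the paper at all: it is a classical result quoted verbatim from \cite[Theorem 3.2.1]{AbateBook}, so there is no in-paper argument to compare against. Your outline is the standard Wolff--Denjoy proof (approximation by $rg$, Wolff's horocycle lemma, then Montel plus the horocycle invariance to pin down the limit), which is precisely the argument behind the citation, and it is correct in outline. The only step you compress too far is the claim that a non-constant subsequential limit makes $g$ an automorphism \emph{with an interior fixed point}: being an automorphism is not by itself a contradiction with non-ellipticity (hyperbolic and parabolic automorphisms are fixed-point-free), so one must additionally observe that the identity arises as a limit of iterates $g^{m_k}$, which forces $g$ to be an \emph{elliptic} automorphism and hence to fix a point of $\D$.
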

The point $\tau$ is usually known as the Denjoy-Wolff point of $g$. In particular, if $w_0 \in \D$, the Denjoy-Wolff Theorem proves that the orbit $w_n = g^n(w_0)$ converges to $\tau$, as $n \to \infty$.

A classical problem in Discrete Iteration is to characterize the directions through which the orbits of a non-elliptic self-map of $\D$ converge towards the Denjoy-Wolff point. Namely, given $w_0 \in \D$, calculating the numbers $s \in [-\pi/2,\pi/2]$ such that there exists a subsequence of the orbit satisfying $\arg(1-\overline{\tau}w_{n_k}) \to s$, as $k \to \infty$. This is the Slope Problem, which firstly appeared around 1930 in \cite{Valiron,WolffSlope}.

Dealing with this problem is usually easier in the upper half-plane setting. To do so, let $\H := \{z \in \C : \mathrm{Im}(z) > 0\}$, and consider the conformal mapping $S \colon \D \to \H$ given by
$$S(w) = i\dfrac{\tau + w}{\tau - w}, \qquad w \in \D,$$
where $\tau \in \partial\D$ is the Denjoy-Wolff point of $g$. Then, construct the self-map $f \colon \H \to \H$ given by $f = S \circ g \circ S^{-1}$. Similarly as in the case of the unit disk, we can consider the iterated self-compositions $f^0 = \mathrm{Id}_{\H}$, and $f^{n+1} = f^n \circ f$, $n \in \N \cup \{0\}$. It is easy to check that $f^n = S \circ g^n \circ S^{-1}$. Something similar occurs in the case of orbits. Given $z_0 \in \H$, its orbit by $f$ is the sequence given by $z_n = f^n(z_0)$. By construction $z_n \to \infty$, as $n \to \infty$. If $w_0 \in \D$ and we choose $z_0 = S(w_0)$, then $z_n = S(w_n)$. That is, $f^n \to \infty$, as $n \to \infty$, locally uniformly. So that the Denjoy-Wolff point of $f$ is infinity.

Concerning the Slope Problem, fixing a subsequence of the orbits, we have that $\arg(1-\overline{\tau}w_{n_k}) \to s$ if and only if $\arg(z_n) \to \pi/2-s$. Inspired by this relationship, we give the following definition.
\begin{definition}
Let $f \colon \H \to \H$ be a holomorphic function whose Denjoy-Wolff point is infinity. Then, given $z_0 \in \H$, we define the set of slopes of $f$ as
$$\mathrm{Slope}[f,z_0] = \{s \in [0,\pi] : \text{There exists a subsequence } \{z_{n_k}\} \text{ with } \arg(z_{n_k}) \to s\},$$
where $\arg$ denotes the principal branch of the argument function.
\end{definition}
By \cite[Proposition 2.5.8]{AbateBook}, the Denjoy-Wolff point of $f \colon \H \to \H$ is infinity if and only if
$$\alpha := \angle\lim_{z \to \infty}\dfrac{f(z)}{z} \in [1,+\infty).$$
In this situation, $f$ is said to be parabolic if $\alpha = 1$, and it is said to be hyperbolic if $\alpha > 1$.

Valiron \cite{Valiron} studied the Slope Problem for hyperbolic self-maps of $\H$ in 1931. It turns out that if $f \colon \H \to \H$ is a hyperbolic self-map with Denjoy-Wolff point infinity, then for every $z \in \H$ there exist $\theta \in (0,\pi)$ such that $\lim_{n \to \infty}\arg(f^n(z)) = \theta$; see \cite[Theorem 4.3.4]{AbateBook}. His ideas have been recently revisited by Bracci and Poggi-Corradini. More precisely, they proved that the function $\mathrm{Slope}[f,\cdot]\colon \H \to (0,\pi)$ is surjective and harmonic \cite[Property 2 (a) and Property 2 (b)]{BracciCorradiniValiron}.

The theory is a significantly more difficult in the case of parabolic self-maps, which are typically divided in two families. For a holomorphic self-map $f \colon \H \to \H$, the Schwarz-Pick Lemma assures that
$$k_{\H}(f^{n+2}(z),f^{n+1}(z)) \leq k_{\H}(f^{n+1}(z),f^n(z)), \qquad z \in \H, \, n \in \N,$$
where $k_{\H}$ denotes the hyperbolic distance of the upper half-plane. In particular, $\lim_{n \to \infty}k_{\H}(f^{n+1}(z),f^n(z))$ exists for all $z \in \H$, and it is a non-negative real number. For non-elliptic self-maps, by \cite[Corollary 4.6.9.(i)]{AbateBook}, we know that the latter limit is either positive for every $z \in \H$ or for no $z \in \H$. Accordingly, we say that $f$ is of positive or of zero hyperbolic step.

In 1979, Pommerenke contributed to the Slope Problem for parabolic self-maps of $\H$. In \cite[Remark 1]{PommerenkeHalfPlane}, he proved that $\lim_{n \to \infty}\arg(f^n(z))$ exists and it is either $0$ or $\pi$ whenever $f$ is a parabolic self-map of positive hyperbolic step with Denjoy-Wolff point infinity. In \cite[Proposition 2.6]{CCZRP-Slope}, using the hyperbolic geometry, we noticed that the latter limit does not depend on $z$. Namely, it holds that either $\mathrm{Slope}[f,z] = \{0\}$ or $\mathrm{Slope}[f,z] = \{\pi\}$ for all $z \in \H$.

The main emphasis in this paper is put on the case where $f$ is a parabolic self-map of zero hyperbolic step. One of the first surprising contributions to this case is in the remarkable paper of Wolff in 1929. In \cite[Section 6]{WolffSlope}, he came up with the map $f \colon \H \to \H$ given by
\begin{equation}
\label{eq:Wolff-example}
f(z) = z + ie^{\pi/2}z^i+ie^{\pi/2}, \quad z \in \H,
\end{equation}
where $z^i$ is defined using the principal branch of the logarithm. It is easy to check that $f$ is parabolic with Denjoy-Wolff point infinity, since $\angle\lim_{z \to \infty}f(z)/z = 1$. For this example, Wolff managed to prove that $\mathrm{Slope}[f,z_0]$ contains at least two points for all initial points $z_0 \in \H$.

The Slope Problem has also been introduced in the continuous setting of Complex Dynamics, that is, for semigroups $\{\phi_t\}_{t \geq 0}$ of holomorphic self-maps of the upper half-plane. A modern exposition of this topic can be found in \cite[Chapter 17]{BCDM}. In \cite{CDM-Slope-Pacific}, Contreras and D\'iaz-Madrigal found that the results of Valiron and Pommerenke can be translated to hyperbolic and parabolic semigroups of positive hyperbolic step, respectively. For a parabolic semigroup of zero hyperbolic step, they proved that the set of slopes is a compact interval which does not depend on the initial point. Despite Wolff's example \eqref{eq:Wolff-example}, they conjectured that, in the continuous setting, the set of slopes should always be a singleton. This conjecture was disproved by Contreras, D\'iaz-Madrigal, and Gumenyuk \cite{CDMGSlope}, and also by Betsakos \cite{BetsakosSlope}, indepedently. Namely, they found a semigroup whose set of slopes is the full interval $[0,\pi]$. Improving the techniques by Betsakos, for every prefixed interval $[a,b] \subset [0,\pi]$, Kelgiannis \cite{KelgiannisSlope} constructed a semigroup whose set of slopes is $[a,b]$.

Quite recently, the latter ideas were taken back to the discrete setting. Namely, even if the orbits of a self-map are discrete sets, we have shown that the set of slopes of parabolic self-maps of zero hyperbolic step are always a compact interval which does not depend on the initial point.
\begin{theorem}
\label{thm:slope-0hs} 
{\normalfont \cite[Theorem 2.9]{CCZRP-Slope}}
Let $f \colon \H \to \H$ be a parabolic function of zero hyperbolic step whose Denjoy-Wolff point is infinity. Then, there exists $0 \leq a \leq b \leq \pi$ such that $\mathrm{Slope}[f,z] = [a,b]$ for all $z \in \H$.
\end{theorem}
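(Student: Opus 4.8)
The plan is to prove two separate assertions: that $\mathrm{Slope}[f,z]$ is a (possibly degenerate) closed interval, and that it does not depend on $z$. Throughout write $z_n=f^n(z)$ and $\theta_n=\arg(z_n)$, so that $\mathrm{Slope}[f,z]$ is exactly the set of cluster values of the bounded sequence $(\theta_n)\subset[0,\pi]$; as such it is automatically nonempty and closed. The whole argument rests on one elementary estimate: $\arg$ is $1$-Lipschitz for the hyperbolic distance, i.e. $|\arg z-\arg w|\le k_{\H}(z,w)$ for all $z,w\in\H$. To see this I would push everything to the strip $\Sigma=\{\zeta : 0<\mathrm{Im}(\zeta)<\pi\}$ through a branch of $\log$, which is a hyperbolic isometry of $\H$ onto $\Sigma$; since the hyperbolic density of $\Sigma$ is $1/\sin(\mathrm{Im}(\zeta))\ge 1$, the imaginary part is $1$-Lipschitz on $\Sigma$, and $\arg=\mathrm{Im}\circ\log$ inherits the bound.

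First I would settle the interval assertion. Because $f$ has zero hyperbolic step, $k_{\H}(z_{n+1},z_n)\to 0$, and the Lipschitz estimate yields $|\theta_{n+1}-\theta_n|\le k_{\H}(z_{n+1},z_n)\to 0$. A bounded real sequence with vanishing consecutive increments has, as its cluster set, the full closed interval $[\liminf_n\theta_n,\limsup_n\theta_n]$: if $\alpha<\beta$ are two cluster values and $c\in(\alpha,\beta)$, a standard crossing argument shows that the sequence, moving by less than any prescribed $\varepsilon$ from some index on while visiting both sides of $c$ infinitely often, must land in $(c-\varepsilon,c+\varepsilon)$ infinitely often, so $c$ is a cluster value too. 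Hence $\mathrm{Slope}[f,z]=[a(z),b(z)]$ with $a(z)=\liminf_n\theta_n$ and $b(z)=\limsup_n\theta_n$.

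It remains to prove that $\mathrm{Slope}[f,z]$ does not depend on $z$, and this is the hard part. A first remark is that the Lipschitz estimate makes the family $z\mapsto\theta_n(z)$ uniformly $1$-Lipschitz, so $a$ and $b$ are themselves $1$-Lipschitz; moreover they are $f$-invariant, since passing from $z$ to $f(z)$ merely shifts the orbit and leaves its cluster set unchanged. Invariance plus continuity is however not enough on its own (for hyperbolic maps the analogous slope function is non-constant and harmonic), so parabolicity and zero step must be used decisively. The route I would take is to compare the two forward orbits as sets rather than term by term. By the Schwarz-Pick lemma $k_{\H}(f^nz,f^nz')$ is non-increasing, but keeping the indices equal only bounds $|\theta_n(z)-\theta_n(z')|$ by a constant, which is insufficient because the two orbits may oscillate across $[a,b]$ out of phase. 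Instead I would aim for the geometric claim that $\inf_{m\ge 0}k_{\H}(f^nz,f^mz')\to 0$ as $n\to\infty$: the forward orbit of $z$ comes hyperbolically arbitrarily close to the orbit of $z'$. Granting this, the Lipschitz estimate shows every cluster value of $(\theta_n(z))$ is a cluster value of $(\theta_m(z'))$, so $\mathrm{Slope}[f,z]\subseteq\mathrm{Slope}[f,z']$, and by symmetry they are equal.

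The main obstacle is therefore this last geometric claim, where the hypotheses genuinely enter. I expect to establish it through the canonical (Koenigs) model, in which $f$ is intertwined with the translation $\zeta\mapsto\zeta+1$ on a planar domain and zero hyperbolic step forces that translation to move points toward the boundary, so that two translated orbits are driven together. Concretely one can use Julia's lemma (the horocycles $\{\mathrm{Im}>c\}$ at the Denjoy-Wolff point are $f$-invariant, so $\mathrm{Im}(z_n)$ is non-decreasing) together with the zero-step estimate $|z_{n+1}-z_n|=o(\mathrm{Im}(z_n))$ (a reformulation of $k_{\H}(z_{n+1},z_n)\to 0$ once one knows $\mathrm{Im}(z_{n+1})/\mathrm{Im}(z_n)\to 1$), to show that at each large height the two orbits occupy essentially the same horizontal location on the hyperbolic scale. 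This is the technical heart of the statement and coincides in substance with the cited \cite[Theorem 2.9]{CCZRP-Slope}.
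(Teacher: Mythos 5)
First, a point of reference: the paper does not prove this theorem at all --- it is imported verbatim from \cite[Theorem 2.9]{CCZRP-Slope} --- so your attempt can only be judged on its own merits and against the strategy of that reference. Your first half is correct and is essentially the standard argument: $\arg=\mathrm{Im}\circ\log$ is Lipschitz for the hyperbolic distance (your strip computation is right, up to the usual factor of $2$ depending on the curvature normalization), zero hyperbolic step gives $k_{\H}(z_{n+1},z_n)\to 0$, hence the increments of $\arg z_n$ vanish and the cluster set is the full interval $[\liminf\arg z_n,\limsup\arg z_n]$. No objection there.

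The gap is in the second half, the independence of the initial point, and you have correctly identified it as the technical heart but not closed it. Your proposed geometric claim $\inf_{m}k_{\H}(f^nz,f^mz')\to 0$ would indeed suffice (with the small extra remark that the minimizing indices $m(n)$ must tend to infinity, which follows since $f^nz\to\infty$), but the justification you offer --- Julia's lemma giving $\mathrm{Im}(z_n)$ non-decreasing, plus $|z_{n+1}-z_n|=o(\mathrm{Im}(z_n))$ --- does not by itself force the two orbits together: these facts control each orbit's step size relative to its own height, but say nothing about the horizontal offset between the two orbits at comparable heights, which is exactly what could keep them hyperbolically separated (and does, for instance, for hyperbolic maps, where both facts can still hold along orbits yet the slope genuinely depends on the point). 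The decisive input you are missing is Pommerenke's normalization theorem \cite{PommerenkeHalfPlane}: writing $f^n(z_0)=x_n+iy_n$, for a parabolic map of \emph{zero} hyperbolic step one has $(f^n(w)-x_n)/y_n\to i$ locally uniformly in $w$. This immediately gives $|f^n(z)-f^n(z')|=o(y_n)$ and $\mathrm{Im}f^n(z)/\mathrm{Im}f^n(z')\to 1$, hence the much cleaner statement $k_{\H}(f^nz,f^nz')\to 0$ with \emph{equal} indices --- your detour through $\inf_m$ and the Koenigs model is then unnecessary, and the Lipschitz estimate finishes the proof exactly as you intend. So the architecture of your argument matches the cited proof, but as written the independence step rests on an unproved claim; citing or reproving Pommerenke's dichotomy is what is needed to make it rigorous.
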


Examples with arbitrary set of slopes have also been constructed through the use of semigroups. Namely, if $f = \phi_1$, where $\{\phi_t\}$ is the semigroup constructed by Kelgiannis, we have proved that $\mathrm{Slope}[f,z] = [a,b]$ for all $z \in \H$. Therefore, we have the following result.
\begin{theorem}
{\normalfont \cite[Theorem 2.13]{CCZRP-Slope}}
\label{thm:arbitrary-slope}
Given $0 \leq a \leq b \leq \pi$, there exists a parabolic function $f \colon \H \to \H$ of zero hyperbolic step whose Denjoy-Wolff point is infinity such that $\mathrm{Slope}[f,z] = [a,b]$ for all $z \in \H$.
\end{theorem}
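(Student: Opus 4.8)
The plan is to give a single explicit formula for $f$, modelled on Wolff's example \eqref{eq:Wolff-example}, and then to read the slope set off the orbit directly. The crucial simplification comes from Theorem~\ref{thm:slope-0hs}: once we know that $f$ is parabolic of zero hyperbolic step with Denjoy--Wolff point $\infty$, its slope set is \emph{automatically} a compact interval, and the set of subsequential limits of the bounded real sequence $\arg(z_n)$ is then forced to be exactly $[\liminf_n \arg(z_n),\,\limsup_n \arg(z_n)]$. Hence it suffices to exhibit one map (for one, hence every, initial point $z_0$) for which $\liminf_n \arg(z_n)=a$ and $\limsup_n\arg(z_n)=b$; we never have to verify that every intermediate slope is attained. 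This turns the problem into a purely asymptotic analysis of a single orbit.

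For the construction I would take a Wolff-type perturbation of a translation,
\[ f(z) = z + \beta + i c\bigl(1 + z^{i\omega}\bigr),\qquad \beta\in\R,\ c>0,\ \omega>0, \]
with $z^{i\omega}=e^{i\omega\log z}$ defined by the principal logarithm. Since $|z^{i\omega}|=e^{-\omega\arg z}\le 1$ on $\H$, one checks at once that $\mathrm{Im} f(z)=\mathrm{Im} z + c\bigl(1+\mathrm{Re}\,z^{i\omega}\bigr)\ge \mathrm{Im} z>0$, so $f$ is a self-map; and $f(z)/z\to 1$ angularly, so $f$ is parabolic with Denjoy--Wolff point $\infty$. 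Because the increments $f(z)-z$ are bounded while $\mathrm{Im}(z_n)\to\infty$, the hyperbolic distance satisfies $k_\H(z_{n+1},z_n)\to 0$, giving zero hyperbolic step. The free parameters $\beta,c,\omega$ are what I would tune to pin down the endpoints $a,b$.

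The heart of the proof is the orbit analysis. Writing $z_n=u_n+iv_n$, the recursion splits as $v_{n+1}-v_n = c\bigl(1+e^{-\omega\arg z_n}\cos(\omega\ln|z_n|)\bigr)$ and $u_{n+1}-u_n = \beta - c\,e^{-\omega\arg z_n}\sin(\omega\ln|z_n|)$. First one shows $v_n\to\infty$ and, by comparing the slowly oscillating sums of $\sin(\omega\ln|z_k|)$ and $\cos(\omega\ln|z_k|)$ with the corresponding integrals, that $u_n$ and $v_n$ are of the same order, so that $\cot(\arg z_n)=u_n/v_n$ oscillates within a band whose centre is governed by $\beta/c$ and whose width is governed by $\omega$. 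A bootstrap argument (estimate $v_n$, then $\ln|z_n|=\ln n+O(1)$, then the phase $\omega\ln|z_n|$, then the induced oscillation of $u_n$, controlling the feedback error at each stage) should yield precise values for $\liminf_n\arg(z_n)$ and $\limsup_n\arg(z_n)$, which one then matches to $a$ and $b$ by choosing the parameters.

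I expect the main obstacle to be the behaviour near the real axis, that is, reaching $a=0$ or $b=\pi$, and more generally producing wide intervals. With bounded real increments and $v_n\sim cn$ the ratio $u_n/v_n$ stays bounded, so $\arg(z_n)$ cannot approach $0$ or $\pi$; the device that addresses this is exactly the choice $d=c$ that makes the imaginary increment degenerate, $v_{n+1}-v_n = c\bigl(1+e^{-\omega\arg z_n}\cos(\omega\ln|z_n|)\bigr)\ge 0$, with equality possible as $\arg z_n\to 0$. During stretches where $\cos(\omega\ln|z_n|)\approx-1$ the imaginary part stalls while the real part keeps drifting, letting $u_n/v_n$ grow and the argument graze the boundary of $(0,\pi)$. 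Making this stalling mechanism quantitative — showing the orbit lingers near the axis for precisely long enough to produce the prescribed $\liminf$/$\limsup$, and no longer, which may force one to enrich the formula (for instance by summing several Wolff-type terms or allowing a slowly varying amplitude) — is the delicate estimate, and is presumably where the explicit analysis of the Herglotz measure of $f$ near $\pm\infty$ enters, both to certify parabolicity and regularity at the Denjoy--Wolff point and to control the wandering rate. The degenerate case $a=b$ (a singleton, obtained from a pure or barely perturbed translation) I would treat separately as an easy warm-up.
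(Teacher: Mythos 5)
Your reduction is the right one, and it is the same one the paper uses: by Theorem \ref{thm:slope-0hs} the slope set is a compact interval independent of the initial point, so it suffices to exhibit a single orbit with $\liminf_n\arg(z_n)=a$ and $\limsup_n\arg(z_n)=b$ (the singleton case is an easy translation $z\mapsto z+e^{ia}$, and the full interval $[0,\pi]$ is handled separately in \cite[Theorem 4.2]{CCZRP-Slope}). The preliminary facts you check for $f(z)=z+\beta+ic(1+z^{i\omega})$ --- self-map, parabolic, zero hyperbolic step --- are also fine. But the entire content of the theorem is the step you leave open: actually forcing the two endpoints of the oscillation to equal the prescribed $a$ and $b$. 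For a single Wolff-type term the direction of the increment $f(z)-z$ is governed by the phase $\omega\ln|z|\ (\mathrm{mod}\ 2\pi)$, which sweeps through all values infinitely often and in the same way at every scale; there is no parameter that locks the drift direction to a prescribed angle for a prescribed stretch of the orbit. So while one can show, as Wolff did for \eqref{eq:Wolff-example}, that $\arg(z_n)$ does not converge, computing $\liminf$ and $\limsup$ exactly as functions of $(\beta,c,\omega)$ and then proving that this pair of functions is onto $\{(a,b):0\le a\le b\le\pi\}$ is nowhere addressed; you concede as much when you say the formula ``may need to be enriched by summing several Wolff-type terms.'' That concession is the theorem. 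There is also a concrete obstruction at the endpoints: the imaginary increment is bounded below by $c(1-e^{-\omega\arg z_n})$, so as long as $\arg z_n$ stays away from $0$ the imaginary part grows at a definite linear rate while the real increment stays bounded; driving $\arg z_n\to 0$ along a subsequence then requires a self-consistent blow-up of $u_n/v_n$ that your bootstrap does not establish.

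What the paper does instead is precisely the enrichment you gesture at, engineered so that the delicate estimate becomes checkable. It takes $F(z)=z+\sum_k a_ke^{i\theta_k}(z+\gamma_k)^{-\epsilon_k}$ with $\gamma_{k+1}>\gamma_k^2$ growing so fast that on the block $\{\gamma_k\le\mathrm{Re}(z)\le\gamma_k^2\}$ the $k$-th term dominates all the others (condition \eqref{cond:small-arg}), and with $\epsilon_k\to 0$ so that this dominant term has argument within $2\pi\epsilon_k+2/k$ of a value pinned by construction to $+\theta$ for even $k$ and $-\theta$ for odd $k$. The orbit therefore crosses each block along an almost straight segment of slope $\tan(\pm\theta)$, and the block is long enough ($\gamma_k$ to $\gamma_k^2$) that the argument of $z_n$ itself is dragged to $\pm\theta+o(1)$ by the time it exits; a final rotation converts the cone $\{|\arg z|<\theta\}$ into the sector $\{a<\arg z<b\}$. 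It is the scale separation plus the pinned increment direction that replaces your stalling heuristic with a proof; without supplying an analogue of those two features, the proposal has a genuine gap.
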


This note focuses on constructing examples of parabolic self-maps of zero hyperbolic step with arbitrary slope sets $[a,b] \subset [0,\pi]$, with $[a,b] \neq [0,\pi]$. These self-maps are of a discrete nature, and so we extend a previous work that primarily relied on semigroups. This yields a new proof of Theorem \ref{thm:arbitrary-slope} using tools from Discrete Iteration, which is given in Section \ref{sec:main}. Indeed, our construction is completely explicit: we explicitly construct the self-map and we explicitly show in which way the orbits wander towards the Denjoy-Wolff point. This strongly contrasts with the arguments in \cite{KelgiannisSlope}, where the semigroup is obtained through its Koenigs domain and the proof relies in subtle estimates of some harmonic measures. In Section \ref{sec:regularity}, we analyze some properties of the Herglotz measure corresponding to such examples. This leads us to discuss the regularity of such self-maps at their Denjoy-Wolff points, and we compare this fact with previous references.

In \cite[Section 4]{CCZRP-Slope}, we also gave a discrete and explicit construction for the cases $[0,\pi]$ and $[0,\pi/2]$. Those self-maps are defined through their Herglotz representation formula, and we discuss their regularity at the Denjoy-Wolff point. However, the new construction is easier to follow.

\section{The main construction}
\label{sec:main}
The goal of this section is, given $0 \leq a < b \leq \pi$ with $[a,b] \neq [0,\pi]$, to construct a parabolic self-map $f \colon \H \to \H$ of zero hyperbolic step with Denjoy-Wolff point infinity for which
$$\mathrm{Slope}[f,z] = [a,b], \qquad z \in \H.$$
Notice that the limit case $[a,b] = [0,\pi]$ has been covered in \cite[Theorem 4.2]{CCZRP-Slope} with a different technique.

We will start with a general setting, which we appropriately modify later to get the desired examples. To present it, let us fix some notation. Consider $\Omega = \C \setminus (-\infty,0]$, $\theta \in (0,\pi/2)$, and $A_{\theta} = \{z \in \C : \arg(z) \in (-\theta,\theta)\}$. We will construct a function $F \colon \Omega \to \C$ such that $F(A_{\theta}) \subset A_{\theta}$. The function $F$ is given by
\begin{equation}
\label{eq:map-F}
F(z) = z + p(z), \qquad z \in \Omega,
\end{equation}
where $p \colon \Omega \to \C$ is given by
\begin{equation}
\label{eq:map-p}
p(z) = \sum_{k = 1}^{\infty}p_k(z), \qquad p_k(z) = \dfrac{a_ke^{i\theta_k}}{(z+\gamma_k)^{\epsilon_k}}, \qquad z \in \Omega,
\end{equation}
where we are using the main branch of the argument of $w$ to define $w^{\epsilon_k}$. In the definition of $p$, the coefficients $a_k$, $\gamma_k$, $\epsilon_k$ and $\theta_k$ satisfy the following assumptions:
\begin{align}
a_k > 0, \qquad \gamma_k > 0, \qquad \epsilon_k > 0, \qquad \pi\epsilon_k \leq \theta, \qquad \lim_{k \to \infty}\epsilon_k = 0, \label{cond:all-positive} \\
\theta_{2k}+\pi\epsilon_{2k} = \theta, \qquad \theta_{2k-1}-\pi\epsilon_{2k-1} = -\theta, \label{cond:angles} \\
\sum_{l = 1}^{\infty}\dfrac{a_l}{\gamma_l^{\epsilon_l}} \leq \dfrac{\gamma_1}{2}, \label{cond:step} \\
\gamma_1 \geq 2, \qquad \gamma_{k+1} > \gamma_k^2, \label{cond:big-gamma} \\
\sum_{l = 1}^{k-1}\dfrac{a_l}{\gamma_k^{\epsilon_l}} \leq \dfrac{\Delta(\theta)a_k}{2k\gamma_k^{2\epsilon_k}}, \qquad \sum_{l = k+1}^{\infty}\dfrac{a_l}{\gamma_l^{\epsilon_l}} \leq \dfrac{\Delta(\theta)a_k}{2k\gamma_k^{2\epsilon_k}}, \label{cond:small-arg}
\end{align}
where
$$\Delta(\theta) = \dfrac{1}{\sqrt{4+\tan^2(\theta)}}.$$

\begin{example}
\label{ex:coefficients-arbitrary-slope}
The latter conditions are fulfilled for the coefficients
$$\epsilon_k = \dfrac{\theta}{\pi}\dfrac{1}{2^k}, \qquad a_k = C_1^k(k!)^2, \qquad \gamma_k^{\epsilon_k} = (C_2 k!)^{3^k}, \qquad k \in \N,$$
where $C_1,C_2 > 1$ are large enough constants, and $\theta_k$ is automatically defined by \eqref{cond:angles}.

\end{example}

\begin{lemma}
\label{lemma:invariant-angle}
Assume \eqref{cond:all-positive}-\eqref{cond:big-gamma}. Then, $p$ is a well-defined holomorphic function on $\Omega$. Moreover, the image of $p$ is contained in $A_{\theta}$.
\begin{proof}
Notice that $p$ is a sum of holomorphic maps which are well-defined on $\Omega$. Then, to see that $p$ is well-defined and holomorphic on $\Omega$, it is enough to find that the sum defining $p$ is uniformly convergent on every compact subset of $\Omega$. Indeed, if $K \subset \Omega$ is a compact set, then there must exist $k_0 \in \N$ such that $K \subset \{z \in \C : \abs{z} \leq \gamma_{k_0}/2\}$. Then, there exists $M > 0$ such that
$$\sum_{l = 1}^{k_0}\abs{p_l(z)} \leq M, \qquad z \in K.$$
For the other terms, if $l > k_0$, by \eqref{cond:big-gamma} we have 
$$\gamma_l \geq \gamma_{l-1}^2 \geq 2\gamma_{l-1} \geq 2\gamma_{k_0}.$$
Then, if $l > k_0$ and $z \in K$, we get
$$\abs{z+\gamma_l} \geq \gamma_l-\dfrac{\gamma_{k_0}}{2} = \gamma_l\left(1-\dfrac{\gamma_{k_0}}{2\gamma_l}\right) \geq \dfrac{\gamma_l}{2}.$$
Then, using the latter inequality and \eqref{cond:all-positive}, for $l > k_0$ we have that
$$\abs{p_l(z)} =\dfrac{a_l}{\abs{z+\gamma_l}^{\epsilon_l}} \leq \dfrac{2^{\epsilon_l}a_l}{\gamma_l^{\epsilon_l}} \leq \dfrac{2a_l}{\gamma_l^{\epsilon_l}} .$$
All in all, using also \eqref{cond:step} and Weierstrass Theorem, we conclude that the sum defining $p$ is uniformly convergent on $K$.

Let us now prove that the image of $p$ is contained in $A_{\theta}$. Since $A_{\theta}$ is a convex cone, for all $z,w \in A_{\theta}$ we have that $z+w \in A_{\theta}$. Moreover, if $z \in A_{\theta}$ and $w \in \overline{A_{\theta}}$, it is easy to check that $z+w \in A_{\theta}$. Summing up, it is enough to prove that the image of each $p_k$ is contained in $A_{\theta}$. To do so, let us assume that $k \in \N$ is even (if it is odd, then the argument is similar). Consider the map
$$z \mapsto \dfrac{1}{(z+\gamma_k)^{\epsilon_k}}, \qquad z \in \Omega.$$
By \eqref{cond:all-positive}, its image is the angular region $A = \{z \in \C : \arg{z} \in (-\pi\epsilon_k,\pi\epsilon_k)\}$. Then, by \eqref{cond:all-positive} and \eqref{cond:angles}, the image of $p_k$ is the angular region delimited by the arguments
$$\theta_k+\pi\epsilon_k = \theta, \qquad \theta_k-\pi\epsilon_k = \theta-2\pi\epsilon_k \geq \theta-2\theta \geq -\theta.$$
Therefore, it is clear that the image of $p_k$ is contained in $A_{\theta}$.
\end{proof}
\end{lemma}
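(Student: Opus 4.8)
The plan is to prove the two assertions in turn: first that $p$ defines a holomorphic function on $\Omega$, by exhibiting locally uniform convergence of the series, and then that $p(\Omega) \subset A_\theta$, by a term-by-term containment argument exploiting that $A_\theta$ is a convex cone.

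For holomorphy, I would first observe that each summand $p_k$ is holomorphic on $\Omega$: for $z \in \Omega$ and $\gamma_k > 0$ the translate $z + \gamma_k$ avoids $(-\infty,0]$, so the principal-branch power $(z+\gamma_k)^{\epsilon_k}$ is well defined and nonvanishing. By Weierstrass's theorem it then suffices to show that $\sum_k p_k$ converges uniformly on every compact $K \subset \Omega$. Fixing such a $K$, I would choose $k_0$ with $K \subset \{\abs{z} \le \gamma_{k_0}/2\}$, bound the finitely many terms with $l \le k_0$ trivially, and for the tail use the rapid growth $\gamma_l > \gamma_{l-1}^2 \ge 2\gamma_{l-1}$ from \eqref{cond:big-gamma} to derive the uniform lower bound $\abs{z+\gamma_l} \ge \gamma_l/2$. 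This reduces the tail to a constant multiple of $\sum_l a_l/\gamma_l^{\epsilon_l}$, which is finite by \eqref{cond:step}; an application of the Weierstrass M-test finishes this step.

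For the containment, I would rely on the fact that $A_\theta$ is a convex cone, so that $z + w \in A_\theta$ whenever $z \in A_\theta$ and $w \in \overline{A_\theta}$. Since the partial sums converge to $p$, it then suffices to check $p_k(\Omega) \subset A_\theta$ for every $k$. The map $z \mapsto (z+\gamma_k)^{\epsilon_k}$ sends $\Omega$ into the sector $\{\arg \in (-\pi\epsilon_k, \pi\epsilon_k)\}$, since $\arg(z+\gamma_k) \in (-\pi,\pi)$ and the principal power scales arguments by $\epsilon_k$; passing to the reciprocal preserves this sector, and multiplying by $a_k e^{i\theta_k}$ with $a_k > 0$ merely rotates it to $\{\arg \in (\theta_k - \pi\epsilon_k, \theta_k + \pi\epsilon_k)\}$. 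Splitting according to the parity of $k$ and invoking the angle normalization \eqref{cond:angles} together with $\pi\epsilon_k \le \theta$ from \eqref{cond:all-positive}, I would verify that both endpoints of this rotated sector lie in $[-\theta,\theta]$, giving $p_k(\Omega) \subset A_\theta$.

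I expect no serious obstacle here; the argument is a careful bookkeeping of hypotheses rather than a deep estimate. The one point to handle with attention is matching each condition to its role: \eqref{cond:big-gamma} must be strong enough to yield the uniform tail bound $\abs{z+\gamma_l}\ge \gamma_l/2$, and the parity-dependent relations in \eqref{cond:angles} must be combined with $\pi\epsilon_k \le \theta$ precisely so that each rotated sector stays inside the fixed cone. Notably, neither the small-argument conditions \eqref{cond:small-arg} nor the precise coefficient choices of Example \ref{ex:coefficients-arbitrary-slope} are needed for this lemma.
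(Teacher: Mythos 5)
Your proposal is correct and follows essentially the same route as the paper: the same choice of $k_0$ and the tail bound $\abs{z+\gamma_l}\ge\gamma_l/2$ derived from \eqref{cond:big-gamma} and \eqref{cond:step} for locally uniform convergence, and the same term-by-term sector argument (reciprocal power maps into $\{\arg\in(-\pi\epsilon_k,\pi\epsilon_k)\}$, rotation by $\theta_k$, then \eqref{cond:angles} and $\pi\epsilon_k\le\theta$) combined with the convex-cone property of $A_\theta$ for the containment. You also correctly note, as the paper does, that the fact $z+w\in A_\theta$ for $z\in A_\theta$ and $w\in\overline{A_\theta}$ is what lets the infinite sum stay in the open cone.
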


\begin{remark}
\label{remark:p-is-holomorphic}
Let us notice that the first part of the latter proof can easily be adapted to see that $p$ is a well-defined and holomorphic map on $\C \setminus (-\infty,-\gamma_1]$.
\end{remark}

\begin{remark}
\label{remark:F-self-map}
The latter lemma implies that the map $F$ given in \eqref{eq:map-F} satisfies $F(A_{\theta}) \subset A_{\theta}$, since $A_{\theta}$ is a convex cone. Moreover, the same argument yields that $F(H) \subset H$ for every half-plane $H \subset \Omega$ such that $A_{\theta} \subset H$.
\end{remark}

The next result is the main idea behind the examples that we will construct later.
\begin{lemma}
\label{lemma:main-idea-slope-arbitrary}
Assume \eqref{cond:all-positive}-\eqref{cond:small-arg}. Let $z_0 = \gamma_1/2$, and consider $z_n = x_n+iy_n = F^n(z_0)$, $n \in \N$. There exist two subsequences $z_{n_k}$ and $z_{m_k}$ such that $\arg(z_{n_k}) \to \theta$ and $\arg(z_{m_k}) \to -\theta$ as $k \to \infty$.
\begin{proof}
First of all, since $z_0 \in A_{\theta}$, it follows from Remark \ref{remark:F-self-map} that $z_n \in A_{\theta}$ for all $n \in \N$. Moreover, since $\theta \in (0,\pi/2)$, we have that $\mathrm{Re}(p(z)) > 0$ for all $z \in \Omega$. This means that $x_n$ is an increasing sequence and that $F$ has no fixed point on $\overline{A_{\theta}}$. Therefore, using Remark \ref{remark:F-self-map}, we conclude that the Denjoy-Wolff point of $F \colon A_{\theta} \to A_{\theta}$ is infinity. Then, $z_n \to \infty$, as $n \to \infty$. Since $\abs{z}$ and $\mathrm{Re}(z)$ are comparable quantities for all $z \in A_{\theta}$, we deduce that $x_n \to +\infty$, as $n \to \infty$.

Following this idea, we define
$$\Omega_k := \{z \in A_{\theta} : \gamma_k \leq \mathrm{Re}(z) \leq \gamma_k^2\}, \quad k \in \N.$$
By \eqref{cond:big-gamma}, $\Omega_k \cap \Omega_l = \emptyset$ for all $k \neq l$. We claim that, for every $k \in \N$ there exists $N_k \in \N$ such that $z_{N_k} \in \Omega_k$ and $z_n \not\in \Omega_k$ for all $n < N_k$. To see this, use \eqref{cond:step} and notice that
\begin{equation}
\label{eq:bound-for-the-x-step}
x_{n+1}-x_n = \mathrm{Re}(p(z_n)) \leq \abs{p(z_n)} \leq \sum_{k = 1}^{\infty}\dfrac{a_k}{\abs{z_n+\gamma_k}^{\epsilon_k}} \leq \sum_{k = 1}^{\infty}\dfrac{a_k}{\gamma_k^{\epsilon_k}} \leq \dfrac{\gamma_1}{2},
\end{equation}
where we have used that $\mathrm{Re}(z_n) \geq 0$. Since
$$\gamma_k^2-\gamma_k \geq \gamma_k \geq \gamma_1 > \dfrac{\gamma_1}{2},$$
where we have also used \eqref{cond:big-gamma}, the claim follows.

Recall that $x_n \to +\infty$, as $n \to \infty$. Since $\{z_n\} \subset A_{\theta}$, we can find $\widetilde{N}_k \in \N$ such that $z_n \in \Omega_k$ for all $N_k \leq n < \widetilde{N}_k$ and $z_{\widetilde{N}_k} \not\in \Omega_k$.

The constructions of the subsequences $z_{n_k}$ and $z_{m_k}$ are very similar, so we will only construct $z_{n_k}$. To do so, assume that $k \in \N$ is even. Notice that, by \eqref{cond:angles},
$$\arg(p_k(z_n)) \in (\theta-2\pi\epsilon_k,\theta), \quad N_k \leq n < \widetilde{N}_k.$$
Moreover, if $l < k$ and $N_k \leq n < \widetilde{N}_k$, then
$$\abs{p_l(z_n)} \leq \dfrac{a_l}{\gamma_k^{\epsilon_l}}.$$
Similarly, if $l > k$ and $N_k \leq n < \widetilde{N}_k$, then
$$\abs{p_l(z_n)} \leq \dfrac{a_l}{\gamma_l^{\epsilon_l}}.$$
However, if $N_k \leq n < \widetilde{N}_k$, we use that $\Omega_k$ is a trapezium whose furthest points from $-\gamma_k$ are $\gamma_k^2 \pm i\gamma_k^2\tan(\theta)$ to conclude that
\begin{align}
\abs{p_k(z_n)} & \geq \dfrac{a_k}{\abs{\gamma_k^2+\gamma_k+i\gamma_k^2\tan(\theta)}^{\epsilon_k}} \geq \dfrac{a_k}{\abs{2\gamma_k^2+i\gamma_k^2\tan(\theta)}^{\epsilon_k}} \nonumber \\
& = \dfrac{a_k}{\gamma_k^{2\epsilon_k}\abs{2+i\tan(\theta)}^{\epsilon_k}} = \dfrac{\Delta(\theta)^{\epsilon_k}a_k}{\gamma_k^{2\epsilon_k}} \geq \dfrac{\Delta(\theta)a_k}{\gamma_k^{2\epsilon_k}}, \label{eq:lower-estimate-pk}
\end{align}
where we have used that $\Delta(\theta) \in (0,1)$ and that $\epsilon_k \leq \theta/\pi < 1$, by \eqref{cond:all-positive}. 

All in all, let us define
$$R_k(z_n) := \dfrac{\displaystyle\sum_{l \in \N, \, l \neq k}p_l(z_n)}{p_k(z_n)}, \qquad N_k \leq n < \widetilde{N}_k.$$
Using \eqref{cond:small-arg} and \eqref{eq:lower-estimate-pk}, it is clear that
$$\abs{R_k(z_n)} \leq \dfrac{\displaystyle\sum_{l \in \N, \, l \neq k}\abs{p_l(z_n)}}{\abs{p_k(z_n)}} \leq \dfrac{1}{k}, \qquad N_k \leq n < \widetilde{N}_k.$$
In particular,
$$\arg(p(z_n)) = \arg(p_k(z_n)) + \arg\left(1+R_k(z_n)\right),$$
where
$$\abs{\arg\left(1+R_k(z_n)\right)} \leq \arcsin\left(\dfrac{1}{k}\right) \leq \dfrac{2}{k}.$$
We conclude that
$$\arg(p(z_n)) \geq \theta - 2\pi\epsilon_k - 2/k, \qquad N_k \leq n < \widetilde{N}_k.$$
Therefore,
\begin{equation}
\label{eq:estimate-quotient}
\dfrac{y_{n+1}-y_n}{x_{n+1}-x_n} \geq \tan\left(\theta - 2\pi\epsilon_k - 2/k\right), \qquad N_k \leq n < \widetilde{N}_k.
\end{equation}

By \eqref{eq:bound-for-the-x-step}, it is clear that
$$x_{N_k} \leq \gamma_k + \dfrac{\gamma_1}{2}.$$
Since $z_{N_k} \in A_{\theta}$, this means that
$$y_{N_k} \geq -\left(\gamma_k + \dfrac{\gamma_1}{2}\right)\tan(\theta).$$
All in all, using \eqref{eq:estimate-quotient}, for $k \in \N$ large enough so that $\tan\left(\theta - 2\pi\epsilon_k - 2/k\right) \geq 0$, we have that
\begin{align*}
y_{\widetilde{N}_k} & = y_{N_k} + (y_{\widetilde{N}_k}-y_{N_k}) \geq y_{N_k} + (x_{\widetilde{N}_k}-x_{N_k})\tan(\theta - 2\pi\epsilon_k - 2/k) \\
& \geq -\left(\gamma_k + \dfrac{\gamma_1}{2}\right)\tan(\theta) + \left(\gamma_k^2-\gamma_k-\dfrac{\gamma_1}{2}\right)\tan\left(\theta - 2\pi\epsilon_k - \dfrac{2}{k}\right).
\end{align*}
Then,
$$\dfrac{y_{\widetilde{N}_k}}{x_{\widetilde{N}_k}} \geq \dfrac{-\left(\gamma_k + \dfrac{\gamma_1}{2}\right)\tan(\theta) + \left(\gamma_k^2-\gamma_k-\dfrac{\gamma_1}{2}\right)\tan\left(\theta - 2\pi\epsilon_k - \dfrac{2}{k}\right)}{\gamma_k^2+\dfrac{\gamma_1}{2}}.$$
In particular, since we chose $k \in \N$ to be an even number, we use \eqref{cond:all-positive} and \eqref{cond:big-gamma} to conclude that
$$\theta \geq \liminf_{k \to \infty}\arg(z_{\widetilde{N}_{2k}}) \geq \theta.$$
Namely,
$$\lim_{k \to \infty}\arg(z_{\widetilde{N}_{2k}}) = \theta,$$
and so it is clear that it is enough to choose $n_k = \widetilde{N}_{2k}$.
\end{proof}
\end{lemma}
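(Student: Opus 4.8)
The plan is to show that as the orbit traverses a carefully chosen sequence of disjoint vertical strips, exactly one term of the series defining $p$ dominates all the others, forcing the increment $z_{n+1}-z_n = p(z_n)$ to point in a prescribed direction; alternating strips then push the orbit toward argument $\theta$ and toward argument $-\theta$. I would begin with the qualitative dynamics. Since $z_0 = \gamma_1/2 \in A_\theta$, Remark \ref{remark:F-self-map} gives $z_n \in A_\theta$ for all $n$, and because $\theta < \pi/2$ Lemma \ref{lemma:invariant-angle} places the image of $p$ in the open right half-plane, so $\mathrm{Re}(p) > 0$. Hence $x_n$ is strictly increasing and $F$ has no fixed point in $\overline{A_\theta}$, whence its Denjoy-Wolff point is $\infty$, $z_n \to \infty$, and $x_n \to +\infty$.

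Next I introduce the strips $\Omega_k = \{z \in A_\theta : \gamma_k \le \mathrm{Re}(z) \le \gamma_k^2\}$, which are pairwise disjoint by \eqref{cond:big-gamma}. The decisive geometric point is that the horizontal step stays small: \eqref{cond:step} gives $x_{n+1}-x_n = \mathrm{Re}(p(z_n)) \le \gamma_1/2$, while each strip has horizontal width $\gamma_k^2-\gamma_k \ge \gamma_k \ge \gamma_1 > \gamma_1/2$. The orbit therefore cannot step over a strip and must enter every $\Omega_k$; I let $N_k$ be the first index with $z_{N_k}\in\Omega_k$ and $\widetilde N_k$ the first subsequent index with $z_{\widetilde N_k}\notin\Omega_k$.

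The heart of the argument is a domination estimate valid for $N_k \le n < \widetilde N_k$. I would bound $|p_k(z_n)|$ from below by evaluating at the farthest corner of the trapezium $\Omega_k$, obtaining $|p_k(z_n)| \ge \Delta(\theta)a_k/\gamma_k^{2\epsilon_k}$, while $\mathrm{Re}(z_n)\ge\gamma_k$ yields $|p_l(z_n)| \le a_l/\gamma_k^{\epsilon_l}$ for $l<k$ and $|p_l(z_n)| \le a_l/\gamma_l^{\epsilon_l}$ for $l>k$. Conditions \eqref{cond:small-arg} then force $\sum_{l\neq k}|p_l(z_n)| \le |p_k(z_n)|/k$, so that $p(z_n)=p_k(z_n)(1+R_k(z_n))$ with $|R_k(z_n)|\le 1/k$. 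Consequently $\arg(p(z_n))$ differs from $\arg(p_k(z_n))$ by at most $\arcsin(1/k)$, and by \eqref{cond:angles} the latter lies within $2\pi\epsilon_k$ of $\theta$ for even $k$ and of $-\theta$ for odd $k$.

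Finally I convert this directional control into a statement about $\arg(z_{\widetilde N_k})$. For even $k$, every step crossing $\Omega_k$ satisfies $(y_{n+1}-y_n)/(x_{n+1}-x_n) \ge \tan(\theta-2\pi\epsilon_k-2/k)$. Since $x_{N_k}\le\gamma_k+\gamma_1/2$ and membership in $A_\theta$ gives $y_{N_k}\ge -(\gamma_k+\gamma_1/2)\tan\theta$, summing the increments across the strip and dividing by $x_{\widetilde N_k}\le\gamma_k^2+\gamma_1/2$ produces a lower bound for $y_{\widetilde N_k}/x_{\widetilde N_k}$ whose limit, using $\epsilon_k\to 0$ and $\gamma_k\to\infty$, equals $\tan\theta$; together with the automatic upper bound $\arg(z_{\widetilde N_k})\le\theta$ coming from $A_\theta$, this gives $\arg(z_{\widetilde N_{2k}})\to\theta$, so $n_k:=\widetilde N_{2k}$ works, and the odd strips give $m_k:=\widetilde N_{2k-1}$ by the symmetric argument. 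I expect the main obstacle to be the domination estimate: making the single term $p_k$ outweigh both the head and the tail of the series at once is exactly what forces the delicate gap conditions \eqref{cond:small-arg}, and checking that the angular error $2\pi\epsilon_k+2/k$ vanishes is what pins the two accumulation values to precisely $\pm\theta$.
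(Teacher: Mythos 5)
Your proposal is correct and follows essentially the same route as the paper's proof: the same strip decomposition $\Omega_k$, the same single-term domination estimate via the trapezium corner and conditions \eqref{cond:small-arg}, and the same conversion of the slope bound $\tan(\theta-2\pi\epsilon_k-2/k)$ into the limit $\arg(z_{\widetilde{N}_{2k}})\to\theta$. No gaps to report.
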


Let us now address the main construction.
\begin{theorem}
\label{thm:arbitrary-slope-examples}
Let $0 \leq a < b \leq \pi$, $[a,b] \neq [0,\pi]$. There exists a parabolic self-map $f \colon \H \to \H$ of zero hyperbolic step with Denjoy-Wolff point infinity for which
$$\mathrm{Slope}[f,z] = [a,b], \qquad z \in \H.$$
\begin{proof}
Consider the angle
$$\theta = \dfrac{b-a}{2} \in (0,\pi/2),$$
and the map $F$ as in \eqref{eq:map-F}, where we are assuming \eqref{cond:all-positive}-\eqref{cond:small-arg}. Let us also consider the half-plane
$H = \{z \in \C : \arg(z) \in (-\theta-a,\pi-\theta-a)\}$. Notice that $A_{\theta} \subset H$. Then, as seen in Remark \ref{remark:F-self-map}, $F(H) \subset H$. In that case, consider the restriction $g = F_{|H} \colon H \to H$.

Let us now remark that the map $p \colon H \to A_{\theta}$, as defined in \eqref{eq:map-p}, is bounded. To see this, notice that
$$\min_{z \in \partial H}\abs{z+\gamma_k} = \gamma_k\sin(a+\theta).$$
Then, by \eqref{cond:step}, given that $z \in H$, we have
\begin{equation}
\label{eq:p-is-bounded}
\abs{p(z)} \leq \sum_{k = 1}^{\infty}\abs{p_k(z)} \leq \dfrac{1}{\sin(a+\theta)}\sum_{k = 1}^{\infty}\dfrac{a_k}{\gamma_k^{\epsilon_k}} < +\infty,
\end{equation}
where we have used that $\sin(a+\theta) \in (0,1]$ and $\epsilon_k \leq 1$, by \eqref{cond:all-positive}. Then, it is clear that
\begin{equation}
\label{eq:g-is-parabolic}
\lim_{\substack{z \to \infty \\ z \in H}}\dfrac{g(z)}{z} = 1.
\end{equation}

Now, consider the conjugation $f \colon \H \to \H$ given by 
\begin{equation}
\label{eq:def-conjutaged-f}
f(z) = \xi g(\overline{\xi} z), \qquad z \in \H ,
\end{equation}
where $\xi = \exp(i(a+\theta))$. By \eqref{eq:g-is-parabolic}, $f$ is also parabolic with Denjoy-Wolff point infinity. Moreover, it follows from Lemma \ref{lemma:invariant-angle} that
$$\mathrm{Slope}[f,z] \subset [a,b], \qquad z \in \H.$$
In particular, taking $z_0 = \xi\gamma_1/2 \in \H$, Lemma \ref{lemma:main-idea-slope-arbitrary} constructs two subsequences of $z_n = f^n(z_0)$ such that 
$$\lim_{k \to \infty}\arg(z_{n_k}) = b, \qquad \lim_{k \to \infty}\arg(z_{m_k}) = a.$$
Together with Theorem \ref{thm:slope-0hs}, this means that
$$\mathrm{Slope}[f,z] = [a,b], \qquad z \in \H.$$
Since $a < b$, we get that $f$ is of zero hyperbolic step (see  \cite[Proposition 2.6]{CCZRP-Slope} or \cite[Remark 1]{PommerenkeHalfPlane}).
\end{proof}
\end{theorem}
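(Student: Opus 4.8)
The plan is to reduce everything to the cone picture already set up and then transport it to $\H$ by a single rotation. I would set $\theta=(b-a)/2$, which lies in $(0,\pi/2)$ precisely because $[a,b]\neq[0,\pi]$, and take $F=\mathrm{Id}+p$ on $\Omega$ with coefficients as in Example \ref{ex:coefficients-arbitrary-slope}, so that all of \eqref{cond:all-positive}--\eqref{cond:small-arg} hold. The cone $A_\theta$ has half-opening $\theta$, i.e.\ total opening $b-a$, and its two bounding rays sit at arguments $\pm\theta$. The idea is to embed $A_\theta$ into a half-plane $H$ that is a rotated copy of $\H$, chosen so that after rotating back the ray at argument $-\theta$ lands on the direction $a$ and the ray at argument $\theta$ lands on the direction $b$. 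Concretely I would take $H=\{z:\arg z\in(-a-\theta,\pi-a-\theta)\}$ and conjugate by $\xi=\exp(i(a+\theta))$, defining $f(z)=\xi\,g(\overline\xi z)$ where $g=F|_H$; since $\overline\xi\,\H=H$ this makes $f$ a self-map of $\H$, and the rotation sends $\pm\theta$ to $a,b$ as required.

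For this to be legitimate I first need $g$ to be a self-map of $H$ and $f$ a parabolic self-map of $\H$ with Denjoy--Wolff point infinity. The containment $A_\theta\subset H$ is immediate from $0\le a$ and $a+2\theta=b\le\pi$, and $H\subset\Omega$ because the negative real axis is avoided, so Remark \ref{remark:F-self-map} gives $F(H)\subset H$ and hence $g\colon H\to H$. The crucial technical point is that, although $p$ may be unbounded near the slit of $\Omega$, it is \emph{bounded} on $H$: each point of $H$ stays at distance at least $\gamma_k\sin(a+\theta)$ from $-\gamma_k$, so $\abs{p_k(z)}\le a_k/(\gamma_k\sin(a+\theta))^{\epsilon_k}$ and summing with \eqref{cond:step} bounds $\abs{p}$ on $H$. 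Then $g(z)/z=1+p(z)/z\to1$ as $z\to\infty$ in $H$, and since $\abs{\xi}=1$ gives $f(z)/z=g(\overline\xi z)/(\overline\xi z)$ the same limit holds for $f$; thus $f$ is parabolic with Denjoy--Wolff point infinity.

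It remains to identify the slope set. Lemma \ref{lemma:invariant-angle} confines the image of $p$, hence the increments of the orbit, to $A_\theta$, which after conjugation forces $\mathrm{Slope}[f,z]\subset[a,b]$ for every $z$. For the reverse inclusion I would run Lemma \ref{lemma:main-idea-slope-arbitrary} on the base point $\gamma_1/2$ (equivalently $\xi\gamma_1/2\in\H$): it produces subsequences of the orbit whose arguments tend to $\theta$ and to $-\theta$ in the cone, i.e.\ to $b$ and to $a$ after the rotation. By Theorem \ref{thm:slope-0hs} the slope set is a compact interval independent of the base point, and since it lies in $[a,b]$ yet contains both endpoints it equals $[a,b]$. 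Finally, because $a<b$ the slope set is non-degenerate, so $f$ cannot be of positive hyperbolic step (whose slope set is a singleton $\{0\}$ or $\{\pi\}$), and therefore $f$ is of zero hyperbolic step.

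The genuine difficulty is hidden in Lemma \ref{lemma:main-idea-slope-arbitrary}, which I am assuming here: the heart of the matter is the coefficient engineering in \eqref{cond:big-gamma}--\eqref{cond:small-arg} that makes a single term $p_k$ dominate the whole series $p$ while the orbit crosses the scale-$k$ region $\Omega_k$, so that the displacement $z_{n+1}-z_n$ points within $O(\epsilon_k+1/k)$ of the direction $\pm\theta$ dictated by the parity of $k$; this is what forces the argument to oscillate between the two extreme rays. Granting that lemma, the only real work left in the present proof is the boundedness estimate for $p$ on $H$ and the bookkeeping of the conjugation, both of which are routine.
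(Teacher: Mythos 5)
Your proposal is correct and follows essentially the same route as the paper's own proof: the same choice of $\theta=(b-a)/2$, the same half-plane $H$ and rotation $\xi=e^{i(a+\theta)}$, the same boundedness estimate for $p$ on $H$ via the distance $\gamma_k\sin(a+\theta)$ to $-\gamma_k$, and the same combination of Lemma \ref{lemma:invariant-angle}, Lemma \ref{lemma:main-idea-slope-arbitrary}, and Theorem \ref{thm:slope-0hs} to pin down the slope set and deduce zero hyperbolic step. No substantive differences to report.
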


\section{Regularity of the examples}
\label{sec:regularity}
We can also prove that the latter examples enjoy some regularity at the Denjoy-Wolff point if further assumptions are made. To do so, we will use the following representation result due to Herglotz.

\begin{theorem}
\label{thm:Herglotz-general}
{\normalfont\cite[Theorem 6.2.1]{Aaronson}}
Every holomorphic function $f \colon \H \to \H$ can uniquely be written as
\begin{equation}
f(z) = \alpha z + \beta + \int_{\R}\dfrac{1+tz}{t-z}d\mu(t), \quad z \in \H,
\end{equation}
where $\alpha \geq 0$, $\beta \in \R$, and $\mu$ is a positive finite measure on $\R$.
\end{theorem}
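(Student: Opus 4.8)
The plan is to reduce the statement to the classical Riesz--Herglotz representation on the unit disk and then transport it back to $\H$ through the Cayley transform. First I would fix the conformal map $\psi \colon \D \to \H$ given by $\psi(w) = i\frac{1+w}{1-w}$, with inverse $\psi^{-1}(z) = \frac{z-i}{z+i}$, noting that $\psi(0) = i$ and that $w = 1$ corresponds to the Denjoy--Wolff point $\infty$. Given a holomorphic $f \colon \H \to \H$, I would set $F(w) = -i\,f(\psi(w))$. Since $\mathrm{Im}(f) > 0$, multiplication by $-i$ rotates $\H$ onto the right half-plane, so $F \colon \D \to \C$ is holomorphic with $\mathrm{Re}(F) \geq 0$.

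Next I would invoke the classical Herglotz representation for functions of nonnegative real part on $\D$: there exist a real constant $c = \mathrm{Im}(F(0))$ and a positive finite Borel measure $\nu$ on $\T$ such that
$$F(w) = ic + \int_{\T}\dfrac{\zeta + w}{\zeta - w}\,d\nu(\zeta), \qquad w \in \D.$$
The heart of the argument is then to undo the substitution. Writing $f(z) = iF(\psi^{-1}(z))$ and parametrising the boundary points $\zeta \in \T\setminus\{1\}$ by the real numbers $t = \psi(\zeta)$, a direct computation yields the kernel identity
$$i\,\dfrac{\zeta + w}{\zeta - w} = \dfrac{1+tz}{t-z}, \qquad w = \psi^{-1}(z),\ \zeta = \psi^{-1}(t),$$
so the integral over $\T\setminus\{1\}$ transforms into $\int_{\R}\frac{1+tz}{t-z}\,d\mu(t)$, where $\mu := \psi_{*}\bigl(\nu|_{\T\setminus\{1\}}\bigr)$ is again positive and finite. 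The atom of $\nu$ at $\zeta = 1$ must be isolated separately: since $i\,\frac{1+w}{1-w} = z$ under the substitution, it contributes the linear term $\alpha z$ with $\alpha = \nu(\{1\}) \geq 0$, while the constant $i\cdot ic = -c$ provides $\beta \in \R$. Collecting terms gives the stated formula.

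For uniqueness I would argue that the data are intrinsically determined by $f$. Taking imaginary parts,
$$\mathrm{Im}(f)(x+iy) = \alpha y + \int_{\R}\dfrac{y(1+t^2)}{(t-x)^2+y^2}\,d\mu(t)$$
exhibits $\mathrm{Im}(f)$ as a positive harmonic function on $\H$, whose Herglotz data are unique; hence $\alpha$, recoverable as $\alpha = \lim_{y\to\infty}\mathrm{Im}(f)(iy)/y$ by dominated convergence, and the finite measure $\mu$ are fixed by $f$ alone, and then $\beta = \mathrm{Re}(f(i)) - \mathrm{Re}\int_{\R}\frac{1+ti}{t-i}\,d\mu(t)$. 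The main obstacle is the careful bookkeeping at the boundary point $\zeta = 1$: one must peel off its point mass before pushing the remainder of $\nu$ forward, verify the kernel identity above exactly, and confirm that the pushforward measure stays finite. Everything else is a routine transfer of the disk theory.
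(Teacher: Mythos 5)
This statement is an external citation (Aaronson, Theorem 6.2.1); the paper supplies no proof of it, so there is nothing internal to compare against. Your argument is the standard and correct derivation: transport to the disk via the Cayley map, apply the Riesz--Herglotz representation for functions of nonnegative real part, and push the measure back. The key computations all check out --- the kernel identity $i\frac{\zeta+w}{\zeta-w} = \frac{1+tz}{t-z}$ under $w=\frac{z-i}{z+i}$, $\zeta=\frac{t-i}{t+i}$ is exactly right, the atom of $\nu$ at $\zeta=1$ does produce the term $\alpha z$ with $\alpha=\nu(\{1\})\geq 0$, and the uniqueness follows as you say from the uniqueness of the representation of the positive harmonic function $\mathrm{Im}(f)$ together with the recovery $\alpha=\lim_{y\to\infty}\mathrm{Im}(f)(iy)/y$ by dominated convergence (the integrand is bounded by $1$ for $y\geq 1$, and $\mu$ is finite). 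This is essentially the proof one finds in the cited reference.
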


The angular derivative of $f$ at infinity can be characterized in terms of the latter representation. Namely,
$$\angle\lim_{z \to \infty}\dfrac{f(z)}{z} = \alpha,$$
as discussed in \cite[Chapter 5, Lemma 2]{DM}. In particular, we see that infinity is the Denjoy-Wolff point of $f$ if and only if $\alpha \geq 1$. Moreover, in this case, $f$ is parabolic if $\alpha = 1$ and hyperbolic if $\alpha > 1$.

Remember that, up to a conjugation, one can always suppose that a parabolic map has Denjoy-Wolff point infinity. Therefore, in this sense, the following result can be used to describe the behaviour of all parabolic functions.
\begin{theorem}
\label{thm:Herglotz}
{\normalfont \cite[Theorem 1.3]{HS}\cite[Theorem 1.2]{CZ-FiniteShift}}
A holomorphic self-map $f \colon \H \to \H$ is parabolic with Denjoy-Wolff point infinity if and only if it can be written as
\begin{equation}
\label{eq:Herglotz}
f(z) = z + \beta + \int_{\R}\dfrac{1+tz}{t-z}d\mu(t), \quad z \in \H,
\end{equation}
where $\beta \in \R$ and $\mu$ is a positive finite measure on $\R$, both of them not simultaneously null.
\end{theorem}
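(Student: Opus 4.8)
The plan is to deduce this characterization as a specialization of the general Herglotz representation in Theorem~\ref{thm:Herglotz-general}, combined with the identification of $\alpha$ as the angular derivative at infinity recorded immediately after it. Both implications rest on the fact that, among non-elliptic self-maps, the Denjoy-Wolff point is infinity exactly when $\alpha \geq 1$, and that such an $f$ is parabolic exactly when $\alpha = 1$.

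For the forward implication, suppose $f$ is parabolic with Denjoy-Wolff point infinity. Applying Theorem~\ref{thm:Herglotz-general}, I would write $f(z) = \alpha z + \beta + \int_{\R}\frac{1+tz}{t-z}\,d\mu(t)$ with $\alpha \geq 0$, $\beta \in \R$, and $\mu$ a positive finite measure. Since $\angle\lim_{z \to \infty} f(z)/z = \alpha$ and $f$ is parabolic, necessarily $\alpha = 1$, which already produces the displayed formula \eqref{eq:Herglotz}. It then remains to rule out the simultaneous vanishing $\beta = 0$, $\mu = 0$: in that degenerate situation $f = \mathrm{Id}_{\H}$ fixes every point of $\H$ and is therefore elliptic, contradicting the assumption that $f$ is a (non-elliptic) parabolic map. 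Hence $\beta$ and $\mu$ are not simultaneously null.

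For the converse, suppose $f$ is given by \eqref{eq:Herglotz} with $(\beta,\mu) \neq (0,0)$. Since this is precisely the representation of Theorem~\ref{thm:Herglotz-general} with $\alpha = 1 \geq 0$, the map $f$ is automatically a holomorphic self-map of $\H$ and satisfies $\angle\lim_{z \to \infty} f(z)/z = 1$. The crux is to verify that $f$ is non-elliptic, so that the value $\alpha = 1$ genuinely forces the Denjoy-Wolff point to be infinity and $f$ to be parabolic. To this end I would compute the imaginary part of the Herglotz kernel, obtaining $\mathrm{Im}\,\frac{1+tz}{t-z} = \frac{(1+t^2)\mathrm{Im}(z)}{\abs{t-z}^2} > 0$ for $z \in \H$. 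Consequently $\mathrm{Im}(f(z)) = \mathrm{Im}(z) + \int_{\R}\frac{(1+t^2)\mathrm{Im}(z)}{\abs{t-z}^2}\,d\mu(t)$, which is strictly larger than $\mathrm{Im}(z)$ whenever $\mu \neq 0$; thus $f$ has no fixed point in $\H$ in that case. If instead $\mu = 0$, then $\beta \neq 0$ and $f(z) = z + \beta$ is a fixed-point-free parabolic translation. In either case $f$ is non-elliptic, and combining this with $\alpha = 1$ shows $f$ is parabolic with Denjoy-Wolff point infinity.

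The main obstacle, and the sole place where the hypothesis that $\beta$ and $\mu$ are not simultaneously null is used, is exactly this fixed-point analysis in the converse direction: one must first exclude the identity and confirm that the remaining maps are genuinely non-elliptic before invoking the angular-derivative characterization, since that characterization presupposes $f$ to be non-elliptic.
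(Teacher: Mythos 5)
Your argument is correct. The paper does not actually prove Theorem \ref{thm:Herglotz} --- it is quoted from \cite[Theorem 1.3]{HS} and \cite[Theorem 1.2]{CZ-FiniteShift} --- so there is no internal proof to compare against; your self-contained derivation, specializing Theorem \ref{thm:Herglotz-general} via the identification $\angle\lim_{z \to \infty} f(z)/z = \alpha$ and excluding the identity map to get $(\beta,\mu) \neq (0,0)$, is the standard route and is complete. In particular, you correctly isolate the one delicate point in the converse: the angular-derivative criterion for the Denjoy--Wolff point presupposes non-ellipticity, and you establish that directly from $\mathrm{Im}\,\dfrac{1+tz}{t-z} = \dfrac{(1+t^2)\,\mathrm{Im}(z)}{\abs{t-z}^2} > 0$ (so $\mathrm{Im}\,f(z) > \mathrm{Im}\,z$ when $\mu \neq 0$), treating the translation case $\mu = 0$, $\beta \neq 0$ separately.
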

Using this result, for a parabolic function $f \colon \H \to \H$ with Denjoy-Wolff point infinity, we say that $f$ is {\sl represented by the pair} $(\beta,\mu)$ whenever \eqref{eq:Herglotz} holds. This notation is used in the following proposition.
\begin{proposition}
\label{prop:regularity-example-slope}
Assume \eqref{cond:all-positive}-\eqref{cond:small-arg}. Let $f \colon \H \to \H$ be the parabolic self-map constructed in the proof of Theorem \ref{thm:arbitrary-slope-examples}. Suppose that $f$ is represented by the pair $(\beta,\mu)$. Then, the measure $\mu$ is absolutely continuous with respect to Lebesgue measure on $\R$.

Moreover, if 
\begin{equation}
\label{cond:sum-for-l1}
\sum_{k = 1}^{\infty}\dfrac{a_k\log(1+\gamma_k^2)}{\gamma_k^{\epsilon_k}} < +\infty, \qquad \sum_{k = 1}^{\infty}\dfrac{a_k}{\epsilon_k\gamma_k^{\epsilon_k}} < +\infty,
\end{equation}
then
$$\int_{\R}\abs{t}d\mu(t) < +\infty.$$
In that case,
$$\beta = \int_{\R}td\mu(t).$$
\end{proposition}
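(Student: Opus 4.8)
The plan is to read off the measure $\mu$ from the imaginary part of $f-\mathrm{Id}$, exploiting that, after undoing the conjugation \eqref{eq:def-conjutaged-f}, this function extends holomorphically across the real line. Writing $\xi=e^{i(a+\theta)}$ and using $g=F_{|H}$, one has $f(z)-z=\xi\,p(\overline{\xi}z)$, with $p$ as in \eqref{eq:map-p}. The rotation $z\mapsto\overline{\xi}z$ carries $\overline{\H}\setminus\{\infty\}$ onto $\overline{H}\setminus\{\infty\}$, and since $a+\theta\in(0,\pi)$ the closed half-plane $\overline{H}$ meets the negative real axis only at $0$; as $p$ is holomorphic on $\C\setminus(-\infty,-\gamma_1]$ by Remark \ref{remark:p-is-holomorphic}, the function $q:=f-\mathrm{Id}$ extends holomorphically across $\R$. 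Consequently $u:=\mathrm{Im}(f-\mathrm{Id})$ is a nonnegative harmonic function on $\H$, continuous up to $\R$, with boundary values $\phi(t)=\mathrm{Im}\bigl(\xi\,p(\overline{\xi}t)\bigr)\ge 0$ (the nonnegativity coming from Lemma \ref{lemma:invariant-angle} and $\xi A_{\theta}\subset\overline{\H}$).

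For absolute continuity I would compare two expressions for $u$. From \eqref{eq:Herglotz} a direct computation gives $u(z)=\int_\R\frac{y(1+t^2)}{(t-x)^2+y^2}\,d\mu(t)$, so $u$ is, up to the factor $\pi$, the Poisson integral of the positive measure $d\nu(t)=(1+t^2)\,d\mu(t)$, which is admissible since $\int_\R\frac{d\nu}{1+t^2}=\mu(\R)<+\infty$. Testing against $\chi\in C_c(\R)$ and letting $y\to 0^+$, the continuity of $u$ up to $\R$ forces the weak-$*$ limit of the dilated Poisson integrals to be the absolutely continuous measure $\frac1\pi\phi(t)\,dt$; by uniqueness of the weak-$*$ limit, $d\nu=\frac1\pi\phi(t)\,dt$, hence $d\mu(t)=\frac{\phi(t)}{\pi(1+t^2)}\,dt$ is absolutely continuous. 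This also identifies the density explicitly, which is what the next step needs.

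For the integrability under \eqref{cond:sum-for-l1} I would bound $\int_\R|t|\,d\mu=\frac1\pi\int_\R\frac{|t|}{1+t^2}\,\phi(t)\,dt$ term by term, using $0\le\phi(t)\le\sum_k\abs{p_k(\overline{\xi}t)}$. The key elementary estimate is the two-sided lower bound $\abs{\overline{\xi}t+\gamma_k}\ge\sin(a+\theta)\,\max(|t|,\gamma_k)$ for all $t\in\R$: the factor $\gamma_k$ comes from the perpendicular-distance computation already used in \eqref{eq:p-is-bounded}, and the factor $|t|$ from $\abs{\overline{\xi}t+\gamma_k}\ge\abs{\mathrm{Im}(\overline{\xi}t)}=|t|\sin(a+\theta)$. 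Splitting $\int_0^\infty$ at $t=\gamma_k$ then bounds the $k$-th contribution by a constant times $\frac{a_k\log(1+\gamma_k^2)}{\gamma_k^{\epsilon_k}}+\frac{a_k}{\epsilon_k\gamma_k^{\epsilon_k}}$: the first piece arises from $t\le\gamma_k$, where $\int_0^{\gamma_k}\frac{t}{1+t^2}\,dt=\frac12\log(1+\gamma_k^2)$, and the second from $t\ge\gamma_k$, where $\int_{\gamma_k}^\infty t^{-1-\epsilon_k}\,dt=\epsilon_k^{-1}\gamma_k^{-\epsilon_k}$. Summing in $k$ and invoking exactly the two series in \eqref{cond:sum-for-l1} gives $\int_\R|t|\,d\mu<+\infty$. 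I expect this matching of the two integration regimes to the two prescribed sums to be the main technical point, since the extremely slow decay $t^{-\epsilon_k}$ with $\epsilon_k\to 0$ is what the $\epsilon_k^{-1}$ factor is designed to absorb.

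Finally, for $\beta=\int t\,d\mu$ I would use the algebraic identity $\frac{1+tz}{t-z}=\frac{1+t^2}{t-z}-t$ in \eqref{eq:Herglotz} and evaluate along $z=iy$. Since $\int_\R|t|\,d\mu<+\infty$, dominated convergence (with dominating functions $|t|$ for the real part and $2+|t|$ for the imaginary part of the integrand, valid for $y\ge 1$) yields $\lim_{y\to\infty}\bigl(f(iy)-iy\bigr)=\beta-\int_\R t\,d\mu$. On the other hand $f(iy)-iy=\xi\,p(\overline{\xi}\,iy)\to 0$ as $y\to\infty$, because $p(w)\to 0$ as $w\to\infty$ (each $p_k\to 0$ while the tail is uniformly small, as in \eqref{eq:p-is-bounded}). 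Equating the two limits gives $\beta=\int_\R t\,d\mu$.
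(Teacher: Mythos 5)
Your argument is correct and follows essentially the same route as the paper: identify $(1+t^2)\,d\mu$ with the boundary values of $\mathrm{Im}(f-\mathrm{Id})$ (which extends continuously to $\R$ via Remark \ref{remark:p-is-holomorphic}) to get absolute continuity, then bound $\int_{\R}\abs{t}\,d\mu$ term by term with a split at $\abs{t}=\gamma_k$, matching the two integration regimes to the two series in \eqref{cond:sum-for-l1}. The only departures are minor: your uniform lower bound $\abs{\overline{\xi}t+\gamma_k}\ge\sin(a+\theta)\max(\abs{t},\gamma_k)$ replaces the paper's three-piece split and its case analysis on $a+\theta$, and you prove $\beta=\int_{\R}t\,d\mu$ directly by dominated convergence along $z=iy$ where the paper simply cites \cite[Theorem 3.4]{CCZRP-Slope}.
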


A proof of Proposition \ref{prop:regularity-example-slope} will be given at the end of the section. Before doing so, we should discuss the way in which this results shows that the map $f$ constructed in Theorem \ref{thm:arbitrary-slope-examples} enjoys some regularity at the Denjoy-Wolff point.

In \cite{CDP-C2}, Contreras, D\'iaz-Madrigal, and Pommerenke studied several dynamical aspects of the iterates of a self-map with additional regularity at its Denjoy-Wolff point. To introduce their ideas, consider a holomorphic map $\phi \colon \D \to \C$ with a boundary fixed point $\tau \in \T$, that is,
$$\angle\lim_{z \to \tau}\phi(z) = \tau.$$
The map $\phi$ is said to be of angular-class of order $p \in \N$ at $\tau$, denoted as $\phi \in C_A^p(\tau)$, if there exist $c_1,\ldots,c_p \in \C$ and a holomorphic function $\gamma \colon \D \to \C$ such that
$$\phi(z) = \tau + \sum_{k = 1}^p\dfrac{c_k}{k!}(z-\tau)^k+\gamma(z), \quad z \in \D, \quad \angle\lim_{z \to \tau}\dfrac{\gamma(z)}{(z-\tau)^p} = 0.$$

In particular, as a consequence of Julia-Wolff-Carath\'eodory Theorem \cite[Corollary 2.5.5]{AbateBook}, every non-elliptic function is of angular-class of first order at its Denjoy-Wolff point.

There are some necessary conditions in terms of the Herglotz's representation \eqref{eq:Herglotz} to assure that a map enjoys a certain angular regularity. We use here the following result, which follows from \cite[Proposition 2.1]{CDP-C2} and \cite[Lemma 3.3]{HS}. It has already been noticed in \cite[Proposition 3.6]{CZ-FiniteShift} and \cite[Remark 4.6]{CCZRP-Slope}.
\begin{proposition}
Let $f \colon \H \to \H$ be a parabolic self-map whose Denjoy-Wolff point is infinity, represented by the pair $(\beta,\mu)$. Suppose that $\int_{\R}\abs{t}d\mu(t) < +\infty$. Consider the self-map $\phi \colon \D \to \D$ which is conjugated to $f$ and has Denjoy-Wolff point $\tau \in \partial\D$. Then, $\phi \in C_A^2(\tau)$. 
\end{proposition}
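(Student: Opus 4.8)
The plan is to transfer the problem to the half-plane model, where $f$ lives and its Herglotz data $(\beta,\mu)$ is available, and to exploit the fact that the membership $\phi\in C_A^2(\tau)$ is equivalent to a second-order angular expansion of $f$ at its Denjoy--Wolff point infinity. Fix a Cayley-type conformal map $S\colon\D\to\H$ with $S(\tau)=\infty$ (one may take $S(w)=i(\tau+w)/(\tau-w)$ as in the introduction), so that $\phi=S^{-1}\circ f\circ S$; any other conjugating map differs from $S$ by a real affine automorphism of $\H$ fixing infinity, and such automorphisms preserve both the finiteness of the first moment and the class $C_A^2$, so there is no loss in working with this specific $S$. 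Writing $\zeta=w-\tau$ and $z=S(w)$, a direct computation gives $z+i=-2i/\zeta$, hence $\phi(w)-\tau=-2i/(f(z)+i)$, which lets me read off the Taylor data of $\phi$ at $\tau$ from the behaviour of $f(z)-z$ as $z\to\infty$.

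The key half-plane input is the existence of a finite angular shift. Starting from the representation \eqref{eq:Herglotz} and the algebraic identity $\tfrac{1+tz}{t-z}+t=\tfrac{1+t^2}{t-z}$, I would write
\[
f(z)-z-\beta+\int_{\R}t\,d\mu(t)=\int_{\R}\frac{1+t^2}{t-z}\,d\mu(t),
\]
where the moment $\int_{\R}t\,d\mu(t)$ is finite by hypothesis. The heart of the matter is then to show that the right-hand side tends to $0$ as $z\to\infty$ non-tangentially; equivalently, that $\angle\lim_{z\to\infty}(f(z)-z)=\beta-\int_{\R}t\,d\mu(t)$ exists and is finite. This is exactly the content of \cite[Lemma 3.3]{HS}: the integrand converges pointwise to $0$, and in a Stolz angle $\{\,\mathrm{Im}\,z\ge\delta\abs{z}\,\}$ one dominates $\abs{(1+t^2)/(t-z)}$ and applies a scaling and dominated-convergence argument, the finiteness of $\int_{\R}\abs{t}\,d\mu(t)$ being precisely what controls the contribution of large $t$ (note that $\int_{\R}t^2\,d\mu(t)$ need not be finite, so a crude supremum bound on the kernel fails and genuine cancellation in the kernel must be used). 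I regard this estimate as the main obstacle, and I would invoke the cited lemma rather than reprove it.

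With the finite shift $b:=\beta-\int_{\R}t\,d\mu(t)\in\R$ in hand, I substitute $f(z)=z+b+r(z)$, where $r(z)\to0$ angularly, into $\phi(w)-\tau=-2i/(f(z)+i)$ and use $z+i=-2i/\zeta$ to obtain
\[
\phi(w)-\tau=\frac{\zeta}{\,1+\tfrac{i}{2}(b+r(z))\,\zeta\,}
=\zeta-\frac{i b}{2}\,\zeta^{2}-\frac{i\,r(z)}{2}\,\zeta^{2}+O(\zeta^{3}).
\]
Reading off coefficients, this displays the required expansion $\phi(w)=\tau+c_1(w-\tau)+\tfrac{c_2}{2}(w-\tau)^2+\gamma(w)$ with $c_1=1$ and $c_2=-ib$, where $\gamma(w):=\phi(w)-\tau-c_1\zeta-\tfrac{c_2}{2}\zeta^2$ is automatically holomorphic on $\D$ as a difference of holomorphic functions. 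Since $\gamma(w)/\zeta^{2}=-\tfrac{i}{2}r(z)+O(\zeta)$ and both $r(z)\to0$ and $\zeta\to0$ as $w\to\tau$ angularly, one gets $\angle\lim_{w\to\tau}\gamma(w)/(w-\tau)^2=0$, that is, $\phi\in C_A^2(\tau)$. The abstract translation of this half-plane statement into the disk language of $C_A^2$ is the content of \cite[Proposition 2.1]{CDP-C2}; combining it with the finite-shift estimate completes the argument, as already observed in \cite[Proposition 3.6]{CZ-FiniteShift} and \cite[Remark 4.6]{CCZRP-Slope}.
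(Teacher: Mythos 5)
Your proposal is correct and follows essentially the same route as the paper, which likewise derives the finite angular shift $\angle\lim_{z\to\infty}(f(z)-z)=\beta-\int_{\R}t\,d\mu(t)$ from \cite[Lemma 3.3]{HS} and then passes to $\phi\in C_A^2(\tau)$ via \cite[Proposition 2.1]{CDP-C2}; your Cayley-map computation merely fleshes out what the paper delegates to the latter citation. The only slip is the identity $z+i=-2i/\zeta$, which for your choice of $S$ should read $z+i=-2i\tau/\zeta$ (yielding $c_2=-ib\overline{\tau}$ rather than $-ib$), a harmless unimodular factor that disappears if one normalizes $\tau=1$, as rotations preserve $C_A^2$ and Stolz angles.
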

Therefore, if we further assume \eqref{cond:sum-for-l1}, the self-map constructed in the proof of Theorem \ref{thm:arbitrary-slope-examples} enjoys the regularity presented in the latter proposition.

It is worth mentioning that there are several previous references in the literature concerning the Slope Problem and the aforementioned regularity. More precisely, such references prove that the orbits of a semigroup have to converge with a definite slope if its Koenigs function or its infinitesimal generator enjoy some regularity. For instance, see \cite[Proposition 3.1]{Regularity-1} and \cite[Theorem 12]{Regularity-2}. Our constructions are in contrast with such results, since the  aforementioned regularity of the self-maps can be related with the regularity of their Koenigs maps, as in \cite[Theorem 4.1]{CDP-C2}, or its derivative, as in \cite[Theorems 5.2 and 6.2]{CDP-C2}. For semigroups, the derivative of the Koenigs map is related to the so-called infinitesimal generator, see \cite[Chapter 10]{BCDM}.

This is also an improvement in comparison to the examples given in \cite{CDMGSlope,BetsakosSlope,KelgiannisSlope}, where the regularity of the associated semigroup is unclear. Let us also notice that the example by Wolff given in \eqref{eq:Wolff-example} is not regular in the above sense.

\begin{proof}[Proof of Proposition \ref{prop:regularity-example-slope}]
In order to prove that $\mu$ is absolutely continuous with respect to Lebesgue measure on $\R$, we define $v = \mathrm{Im}(f(z)-z) \colon \H \to (0,+\infty)$. Notice that $v$ is a harmonic map on $\H$. By \eqref{eq:p-is-bounded}, $v$ is bounded. Therefore, it can be represented as the Poisson integral of its boundary values \cite[Chapter I, Lemma 3.4]{Garnett-Bounded}, that is,
$$v(z) = \int_{\R}\dfrac{1}{\pi}\dfrac{y}{(t-x)^2+y^2}v^*(t)dt, \qquad z = x+iy \in \H,$$
where
$$v^*(x) = \lim_{y \to 0^+}v(x+iy) \in [0,+\infty).$$
By Remark \ref{remark:p-is-holomorphic}, $f$ is holomorphic in a domain containing $\overline{\H}$. Then, the latter limit exists for all $x \in \R$.

Recall that
$$\mathrm{Im}(f(z)-z) = v(z) = \int_{\R}\dfrac{(1+t^2)y}{(t-x)^2+y^2}d\mu(t), \qquad z \in \H.$$
Then, using the uniqueness in Theorem \ref{thm:Herglotz-general}, we have that
$$\dfrac{d\mu}{dm}(t) = \dfrac{1}{\pi}\dfrac{v^*(t)}{1+t^2}, \quad t \in \R,$$
where $m$ is Lebesgue measure on $\R$.

Let us now prove that, if \eqref{cond:sum-for-l1} holds, then
$$\int_{\R}\abs{t}d\mu(t) < +\infty.$$
To do so, using \eqref{eq:def-conjutaged-f}, we note that
\begin{align}
\label{eq:int-to-sums}
\int_{\R}\abs{t}d\mu(t) & = \int_{\R}\dfrac{\abs{t}}{1+t^2}v^*(t)dt \leq \int_{\R}\dfrac{\abs{t}}{1+t^2}\abs{p(\overline{\xi}t)}dt \\
& \leq \sum_{k=1}^{\infty}\int_{\R}\dfrac{a_k\abs{t}}{(1+t^2)\abs{\overline{\xi}t+\gamma_k}^{\epsilon_k}}dt. \nonumber
\end{align}
Let us estimate the latter integrals. To start, notice that
\begin{align*}
\int_{\R}\dfrac{\abs{t}}{(1+t^2)\abs{\overline{\xi}t+\gamma_k}^{\epsilon_k}}dt = \int_{\R}\dfrac{\abs{t}}{(1+t^2)\abs{t+\cos(\phi)\gamma_k+i\sin(\phi)\gamma_k}^{\epsilon_k}}dt,
\end{align*}
where we recall that $\xi = e^{i\phi}$, $\phi = a + \theta$, and $\theta = (b-a)/2$. Then, since $0 \leq a < b \leq \pi$, we conclude that $\phi \in (0,\pi)$. For the estimates, we assume that $\phi \in (0,\pi/2]$ (the other case is similar). Indeed, we split the latter integral into three parts. For the first one, we have
\begin{align*}
\int_{-\gamma_k}^{\gamma_k}\dfrac{\abs{t}}{(1+t^2)\abs{t+\cos(\phi)\gamma_k+i\sin(\phi)\gamma_k}^{\epsilon_k}}dt & \leq \int_{-\gamma_k}^{\gamma_k}\dfrac{\abs{t}}{(1+t^2)\sin(\phi)^{\epsilon_k}\gamma_k^{\epsilon_k}}dt \\
& = \dfrac{\log(1+\gamma_k^2)}{\sin(\phi)^{\epsilon_k}\gamma_k^{\epsilon_k}} \leq \dfrac{\log(1+\gamma_k^2)}{\sin(\phi)\gamma_k^{\epsilon_k}},
\end{align*}
where we have used that $\sin(\phi) \in (0,1]$ and $\epsilon_k \in (0,1)$. Concerning the second part,
\begin{align*}
\int_{\gamma_k}^{+\infty}\dfrac{\abs{t}}{(1+t^2)\abs{t+\cos(\phi)\gamma_k+i\sin(\phi)\gamma_k}^{\epsilon_k}}dt & \leq \int_{\gamma_k}^{+\infty}\dfrac{t}{(1+t^2)t^{\epsilon_k}}dt \\
& \leq \int_{\gamma_k}^{+\infty}\dfrac{dt}{t^{1+\epsilon_k}} = \dfrac{1}{\epsilon_k\gamma_k^{\epsilon_k}}.
\end{align*}
For the third part, we have that
\begin{align*}
& \int_{-\infty}^{-\gamma_k}\dfrac{\abs{t}}{(1+t^2)\abs{t+\cos(\phi)\gamma_k+i\sin(\phi)\gamma_k}^{\epsilon_k}}dt \\
& = \int_{\gamma_k}^{+\infty}\dfrac{t}{(1+t^2)\abs{-t+\cos(\phi)\gamma_k+i\sin(\phi)\gamma_k}^{\epsilon_k}}dt \\
& \leq \int_{\gamma_k}^{+\infty}\dfrac{t}{(1+t^2)(t-\cos(\phi)\gamma_k)^{\epsilon_k}}dt.
\end{align*}
However, note that
$$t \mapsto \dfrac{t}{t-\cos(\phi)\gamma_k}, \qquad t \geq \gamma_k,$$
is a decreasing map. Therefore, we have that
$$\dfrac{t}{t-\cos(\phi)\gamma_k} \leq \dfrac{1}{1-\cos(\phi)}, \qquad t \geq \gamma_k.$$
This means that
\begin{align*}
\int_{-\infty}^{-\gamma_k}\dfrac{\abs{t}}{(1+t^2)\abs{t+\cos(\phi)\gamma_k+i\sin(\phi)\gamma_k}^{\epsilon_k}}dt & \leq \dfrac{1}{(1-\cos(\phi))^{\epsilon_k}}\int_{\gamma_k}^{+\infty}\dfrac{dt}{t^{1+\epsilon_k}} \\
& = \dfrac{1}{(1-\cos(\phi))^{\epsilon_k}}\dfrac{1}{\epsilon_k\gamma_k^{\epsilon_k}} \\
& \leq \dfrac{1}{1-\cos(\phi)}\dfrac{1}{\epsilon_k\gamma_k^{\epsilon_k}},
\end{align*}
where we have used that $1-\cos(\phi) \in (0,1]$ and $\epsilon_k \in (0,1)$.

All in all, we have shown that there exists $C = C(\phi) > 0$ so that, using \eqref{eq:int-to-sums}, we have that
$$\int_{\R}\abs{t}d\mu(t) \leq C(\phi)\sum_{k = 1}^{\infty}\left(\dfrac{a_k\log(1+\gamma_k^2)}{\gamma_k^{\epsilon_k}} + \dfrac{a_k}{\epsilon_k\gamma_k^{\epsilon_k}}\right).$$
Therefore, the result follows from \eqref{cond:sum-for-l1}.

Finally, we notice that if 
$$\int_{\R}\abs{t}d\mu(t) < +\infty,$$
then the equality
$$\beta = \int_{\R}td\mu(t)$$
is a consequence of \cite[Theorem 3.4]{CCZRP-Slope}.
\end{proof}

\bibliographystyle{plain}

\end{document}